\title{Homological stability for subgroups of surface braid groups}
\author{TriThang Tran}
\thanks{The author was partially supported by a David Hay postgraduate award}
\email{trithang@uoregon.edu}
\date{\today}
\newtheorem*{rep@theorem}{\rep@title}
\newcommand{\newreptheorem}[2]{%
\newenvironment{rep#1}[1]{%
 \def\rep@title{#2 \ref{##1}}%
 \begin{rep@theorem}}%
 {\end{rep@theorem}}}
\newtheorem{theorem}{Theorem}[section]
\newtheorem{lemma}[theorem]{Lemma}
\newtheorem{proposition}[theorem]{Proposition}
\newtheorem{definition}[theorem]{Definition}
\theoremstyle{remark}
\newtheorem{remark}[theorem]{Remark}
\newcommand{\del}{\partial}
\newcommand{\norm}[1]{\lVert#1\rVert}
\newcommand{\st}{\; \vert \;}
\newcommand{\conf}{\mathrm{Conf}}
\newcommand{\pconf}{\mathrm{PConf}}
\newcommand{\stab}{\mathrm{stab}}
\newcommand{\lf}{\left\lfloor}
\newcommand{\rf}{\right\rfloor}
\newcommand{\surf}{S}
\newcommand{\braid}{\mathrm{Br}}
\newcommand{\sym}{\Sigma}
\newcommand{\symp}{\mathrm{Sym}}
\newcommand{\forget}{\mathrm{forget}}
\begin{document}

\maketitle

\begin{abstract}
In this paper we prove homological stability for certain subgroups of surface braid groups. Alternatively, this is equivalent to proving homological stability for configurations of subsets of exactly $\xi$ points in a surface as we increase the number of subsets. For open surfaces, we prove the result integrally using a variation of the arc complex which we dub the ``fern complex". We use a technique of Randal-Williams to extend the result rationally for closed surfaces.
\end{abstract}

\section{Introduction} \label{sec: introduction}
Let $\surf$ be a connected surface and $\xi \in \mathbb{Z}_{\geq 1}$. The \emph{configuration space of $\xi$ points in $\surf$} is the space of subsets of $\surf$ of size exactly $\xi$. A \emph{surface braid group} is a fundamental group, $\braid_\xi(\surf):= \pi_1(\conf_{\xi}(\surf))$. When $\surf = D^2$ is a disk, then this is the usual Artin braid group with $\xi$ strands. In this paper, we are interested in studying particular subgroups of surface braid groups which we define as follows.

The \emph{$n$th ordered $\xi$-configuration space of $\surf$} is
\[ \pconf_n^\xi(\surf) := \{ ( \bm{p}_1, \ldots, \bm{p}_n ) \subset \conf_\xi(\surf)^{\times n} \st \bm{p}_i \cap \bm{p}_j = \emptyset \mbox{ for } i \neq j \}. \]
The symmetric group $\Sigma_n$ acts on $\pconf_n^\xi(\surf)$ by: if $\sigma \in \sym_n$ then $\sigma.(\bm{p}_1, \ldots, \bm{p}_n) = (\bm{p}_{\sigma(1)}, \ldots, \bm{p}_{\sigma(n)})$. 
\begin{definition} The \emph{$\xi$-configuration space} of $\surf$ is the quotient
\[ \conf_n^\xi(\surf) := \frac{\pconf_n^\xi(\surf)}{\Sigma_n}. \]
\end{definition}
One way to view this space is that it is the usual configuration space, $\conf_{n\xi}(\surf)$, of $n\xi$ points in $\surf$, where the points have been partitioned into $n$ subsets of size $\xi$. There is a covering map
	\[ \conf_n^\xi(\surf) \to \conf_{n\xi}(\surf) \]
that forgets the partition of points into subsets. This is a finite sheeted covering map whose fibres correspond to the number of ways to group $n \xi$ points into subsets of size $\xi$. In our pictures, we will represent points being in different subsets by using different shapes. It is also natural to think of points being ``coloured".

\begin{definition}
The \emph{$\xi$-surface braid group} of $\surf$ is the fundamental group
	\[ \braid^\xi_n(\surf) := \pi_1(\conf_n^\xi(\surf)). \]
\end{definition}
In \cite{fn62}, Fadell and Neuwirth show that $\conf_{n\xi}(\surf)$ is $K(\pi,1)$. The long exact sequence in homotopy groups then shows that $\conf_n^\xi(\surf)$ is also $K(\pi,1)$. 
There is therefore an isomorphism 
	\[ H_*(\conf_n^\xi(\surf) ; \mathbb{Z}) \cong H_*(\braid_n^\xi(\surf) ; \mathbb{Z}) \]
between singular homology and group homology. The upshot of this isomorphism is that studying the homology of $\conf_n^\xi(\surf)$ is the same as studying the homology of $\braid_n^\xi(\surf)$. We will freely switch between the two. The goal of this paper is to prove the following two theorems.

\begin{reptheorem}{thm: surface braid stability} Let $\surf$ be the interior of a surface with boundary. There is a map $\stab : \braid^\xi_n(\surf) \rightarrow \braid^\xi_{n+1}(\surf)$ such that the induced map
	\[ \stab_*: H_k(\braid^\xi_n(\surf) ; \mathbb{Z}) \rightarrow H_k(\braid^\xi_{n+1}(\surf) ; \mathbb{Z}) \]
is an isomorphism for $2k \leq n$.
\end{reptheorem}

\begin{reptheorem}{thm: surface braid closed} Let $\surf$ be any surface (open or closed).
	\[ H_*(\braid^\xi_n(\surf) ; \mathbb{Q}) \cong H_*(\braid^\xi_{n+1}(\surf) ; \mathbb{Q}) \]
for $2k \leq n$. The isomorphism can be realised as a transfer map \[t_{n+1} : H_k(\braid_{n+1}^\xi (\surf); \mathbb{Q}) \to H_k(\braid_n^\xi(\surf);\mathbb{Q})\] which we define in \cref{sec: braid stability closed}.
\end{reptheorem}

The main technical result that we will use is a high connectivity result for certain simplicial complexes which we call \emph{fern complexes} and are defined in \cref{sec: fern complex}. The groups $\braid_n^\xi(\surf)$ will act on these fern complexes and from there, the literature has a well oiled machine for proving homological stability. In \cref{sec: open}, we will make use of an axiomatisation of this machine by Hatcher and Wahl, which is Theorem 5.1 of \cite{hw10} to prove \cref{thm: surface braid stability}.

\subsection{A brief history} Homological stability for braid groups was first proven by Arnol'd in \cite{arnold69}. Homological stability was then extended to configuration spaces of open manifolds by the work of Segal and McDuff in the 70's \cite{segal73, mcduff75, segal79}. Randal-Williams and Church have recently shown that configuration spaces of closed manifolds satisfy homological stability with rational coefficients \cite{orw13, church12}. This paper studies the homology of natural covering space of configuration spaces. Thus the main theorems can be regarded as a homological stability theorem for braid groups with certain twisted coefficients. 

Configuration spaces for manifolds other than points have also been studied in the literature. For example, the configuration space of circles in $\mathbb{R}^3$ has been well studied (see for example \cite{bh13}). Its fundamental group is sometimes called the symmetric automorphism group, $\sym \mathrm{Aut} F_n$, or just the ring group $R_n$. Homological stability for $R_n$ was proven by Hatcher and Wahl in \cite{hw10}. Homological stability for $\xi$-configuration spaces gives an example of homological stability for manifolds where the manifold being stabilised by is disconnected. In his doctoral thesis, Palmer proves homological stability for configurations of (possibly disconnected) manifolds in a larger ambient manifold under certain dimension conditions \cite{palmer13b}. However, the dimesnion conditions do not apply to the case of $0$-manifolds in a surface so that this paper serves also to fill this curious gap in the literature.

\subsection{Outline}
In \cref{sec: fern complex} we define a simplicial complex on which $\braid_n^\xi(\surf)$ will act and show that this simplicial complex is highly connected. In \cref{sec: stab} we give a definition of the stabilisation map for $\xi$-configuration spaces, which can be appropriately identified with the stabilisation map for $\xi$-braids. In \cref{sec: open} we prove \cref{thm: surface braid stability}. In \cref{sec: braid stability closed} we define the transfer map for $\xi$-configuration spaces and use it to prove \cref{thm: surface braid closed}.

\subsection{Acknowledgements}
I would like to thank Nathalie Wahl and Craig Westerland for many useful discussions relating to this work. I would also like to thank Jeffrey Bailes for reading earlier drafts of this paper. This work formed part of my work towards a Ph.D at the University of Melbourne. It was a wonderful place to do research.

\section{Fern complexes} \label{sec: fern complex}
In this section we will define the simplicial complexes that our $\xi$-braids will act on and show that they are highly connected. We will make use of some of the theory of simplicial complexes which is well summarised in the appendix of \cite{wahl12}.

Let $\xi \in \mathbb{Z}_{\geq 1}$ be fixed. Let $\surf$ be a surface with boundary with $n\xi$ marked points in its interior. Further let $\Delta_1, \ldots, \Delta_n$ be a partition of those marked points into disjoint sets of size $\xi$. Lastly, fix a base point $*$ in the boundary $\del \surf$. By an \emph{arc}, we will mean an embedded path in $\surf$ with one endpoint meeting the boundary of $D$ at $*$ transversally and the other at one of the marked points in $\surf$. Simplicial complexes where simplices are defined by non-intersecting arcs, called \emph{arc complexes} and have been studied extensively in the literature (see for example \cite{harer85, hatcher91}). We will study an analogous version of this which we call the ``fern complex".

%We first recall some definitions from the theory of simplicial complexes which we will make use of in this section. To a simplical complex, we can associate a topological space called its geometric realisation $\norm{X}$, which we will sometimes just write as $X$ again. By any topological properties of $X$, such as its homotopy groups, we will me the properties of $\norm{X}$. Let $\sigma$ be a simplex in $X$. The \emph{star} of $\sigma$, denoted $Star(\sigma)$, is the subcomplex of $X$ of simplices containg $\sigma$, together with their faces. The \emph{link of $\sigma$}, denoted $Link(\sigma)$, is the subcomplex of $Star(\sigma)$ of simplices disjoint from $\sigma$. We have the relation $Star(\sigma) = Link(\sigma) * \sigma$, where $*$ denotes the join of two simplices.

\begin{definition} A \emph{fern} is an unordered $\xi$-tuple of arcs that do not intersect except at $*$ and such that their other endpoints are all in the same $\Delta_i$.
\end{definition}

We will be considering ferns up to isotopy relative to the marked points and $*$. Two or more isotopy classes of ferns are \emph{disjoint} if there exists representative ferns that are disjoint.

\begin{definition} \label{def: fern} The \emph{fern complex}, $A_n^\xi= A_n^\xi(\surf, \Delta_1, \ldots \Delta_n)$, is the simplicial complex such that:
	\begin{itemize}
	\item vertices are isotopy classes of ferns; and
	\item $p$-simplices are collections of $(p+1)$ vertices that are disjoint except at $*$.
	\end{itemize}
%For $0 \leq i \leq p$, the boundary maps 
%	\[\del_i : (A_n)^\xi_p \rightarrow (A_n)^\xi_{p-1}\]
%are given by forgetting the $i^{th}$ fern. 
\end{definition}
See \cref{pic: fern simplex} for examples of simplices in the fern complex of a disk.
\begin{center}
\begin{figure}
		\includegraphics[scale=0.26]{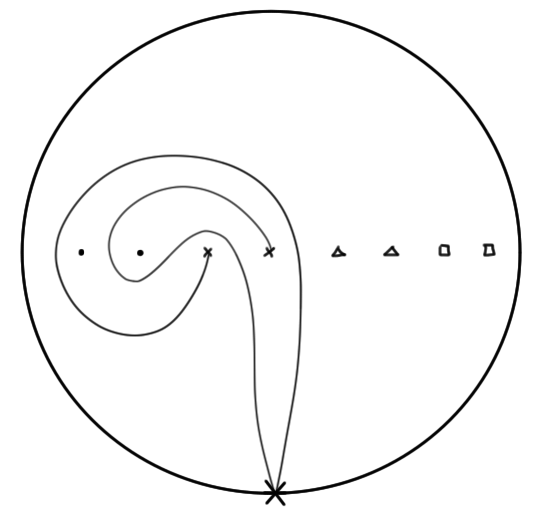}
		\includegraphics[scale=0.26]{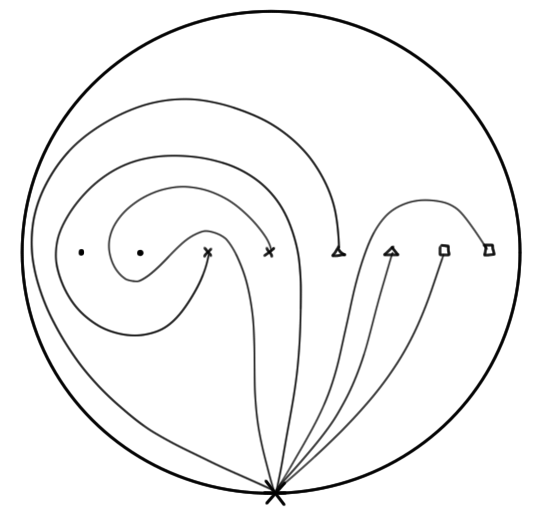}
\caption{A picture of representatives of a vertex (left) and a 2-simplex (right) in $A_4^2(D^2)$. Note that a single fern must have all endpoints of arcs that are not at $*$ in the same subset.} \label{pic: fern simplex}
\end{figure}
\end{center}
\vspace{-10pt}

Recall that a space $X$ is $n$-connected if its homotopy groups, $\pi_k(X)$, are trivial for $k \leq n$. The goal of this section is to prove the following connectivity theorem for fern complexes.
\begin{theorem} \label{thm: fern connectivity} $A_n^\xi$ is $n-2$ connected.
\end{theorem}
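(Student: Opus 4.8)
The plan is to proceed by induction on $n$, following the standard strategy for proving connectivity of arc-type complexes (cf.\ Harer, Hatcher, Wahl). For the base cases $n=1$ and $n=2$ the statement only asserts non-emptiness and connectivity respectively, which can be checked by hand: any fern exists by pushing $\xi$ arcs from $*$ to the $\xi$ points of some $\Delta_i$, and any two ferns can be joined after an isotopy that makes them disjoint from a chosen arc, using the classical fact that the ordinary arc complex of a surface is connected. For the inductive step, I would fix a single arc $a_0$ in the boundary collar region that reaches the point set $\Delta_n$ — more precisely fix a vertex $v_0$ (a fern based on $\Delta_n$) that can be assumed ``outermost'' — and set up a comparison between $A_n^\xi(\surf)$ and the link of $v_0$, which should be identified (up to the relevant connectivity range) with a fern complex $A_{n-1}^\xi(\surf')$ on the cut surface $\surf' = \surf \setminus (\text{neighbourhood of } v_0)$ carrying the remaining $(n-1)\xi$ points partitioned into $\Delta_1,\dots,\Delta_{n-1}$.

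The technical engine will be the ``bad simplices'' / link argument packaged in the appendix of \cite{wahl12}: one shows that the inclusion of the subcomplex spanned by ferns disjoint from $v_0$ is highly connected, and that the link of $v_0$ is $(n-3)$-connected by induction, so that adding back stars of vertices does not change connectivity below the required range. Concretely I would (i) show every vertex can be isotoped off $v_0$ after finitely many ``surgeries'' of arcs along $v_0$, giving a deformation retraction argument or a ``flow'' that decreases geometric intersection number with $v_0$; (ii) run the bad simplex argument of \cite[Prop.\ 2.1 or similar]{wahl12}, where a simplex $\sigma$ is ``bad'' if none of its vertices is disjoint from $v_0$, showing the complex of bad simplices in each relevant link is highly connected (this is where one re-uses connectivity of ordinary arc complexes, since resolving the intersections of a single fern's $\xi$ arcs with $v_0$ is essentially a statement about arcs, not ferns); (iii) conclude that $A_n^\xi$ is $(n-2)$-connected from the $(n-3)$-connectedness of $\operatorname{Link}(v_0) \simeq A_{n-1}^\xi(\surf')$ together with the contractibility of stars.

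The main obstacle I anticipate is step (i)–(ii): the surgery/resolution move must be carried out for a \emph{fern}, i.e.\ $\xi$ arcs at once sharing the endpoint $*$, rather than for a single arc, so one must check that the standard innermost-disk surgery can be performed on all $\xi$ arcs of a fern simultaneously while keeping the result a valid fern (all new endpoints still in one $\Delta_i$, arcs still disjoint except at $*$). The key point is that the endpoints of a fern's arcs at the marked points are never touched by the surgery — only the intersections with $v_0$ in the interior are removed — so the ``$\Delta_i$-condition'' is automatically preserved, and the disjointness is preserved by choosing innermost intersection points consistently. A secondary subtlety is that the cut surface $\surf'$ may have different topology (an extra boundary component or a drop in genus) depending on how $v_0$ sits, but since the theorem's conclusion depends only on $n$ and not on the topological type of $\surf$, the induction still closes; one only needs the cut surface to again be a surface with boundary containing $*$, which holds since $v_0$ meets $\partial\surf$ only at $*$. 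Finally I would note that the bound $n-2$ (as opposed to a floor like $\lfloor (n-2)/2\rfloor$) is exactly what one gets from this ``one vertex at a time'' induction and is what is needed to feed into the Hatcher–Wahl machine in \cref{sec: open}.
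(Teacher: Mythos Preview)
Your plan diverges from the paper's proof at the key structural step, and as written it has a gap that costs you exactly one degree of connectivity.

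You propose to work entirely inside $A_n^\xi$: fix a fern $v_0$ with endpoints in $\Delta_n$, run a Hatcher-style surgery to push everything into $\mathrm{Star}(v_0)$, and identify $\mathrm{Link}(v_0)\cong A_{n-1}^\xi(\surf')$. The problem is that a fern $a\neq v_0$ whose arcs also end in $\Delta_n$ shares its \emph{marked-point} endpoints with $v_0$. Your surgery only removes \emph{interior} intersections with $v_0$; the endpoint coincidences at $\Delta_n$ persist, so such an $a$ can never be made ``disjoint except at $*$'' from $v_0$ and never enters $\mathrm{Star}(v_0)$ in $A_n^\xi$. You even note that the surgery ``never touches the marked-point endpoints'' --- but that is precisely why the flow does not retract onto the star. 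If you then try to run the bad-simplex argument with ``bad $=$ not disjoint from $v_0$'', a bad $p$-simplex may miss $\Delta_n$ entirely, so its good link lives in an $A^\xi_{n-p-2}$ (you lose both the $p+1$ colours of $\sigma$ \emph{and} $\Delta_n$), which by induction is only $(n-p-4)$-connected --- one short of the $(n-p-3)$ you need. Your ``one vertex at a time'' numerology therefore yields $(n-3)$-connectedness, not $(n-2)$.

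The paper resolves this by enlarging the complex rather than shrinking it. It introduces $FA_n^\xi$, the complex with the same vertices but where simplices are allowed to share marked-point endpoints, and proves (\cref{lem: fern contractible}) that $FA_n^\xi$ is contractible via exactly the Hatcher flow you describe; in $FA_n^\xi$ the flow \emph{does} land in $\mathrm{Star}(v)$, because sharing endpoints at $\Delta_n$ is now permitted. One then fills $S^k\to A_n^\xi$ by a disk $D^{k+1}\to FA_n^\xi$ and declares a simplex \emph{bad} when every vertex repeats a colour already present --- i.e.\ when it lies in $FA_n^\xi\setminus A_n^\xi$. The crucial count is that a bad $p$-simplex uses at most $\lfloor (p+1)/2\rfloor$ colours (each colour appears at least twice), so its link lands in an $A^\xi_{n'}$ with $n'\ge n-\lfloor (p+1)/2\rfloor$, and one checks $n'-2\ge k-p$. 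This halving is what buys the extra degree of connectivity your approach lacks.
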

Our method of proof will be similar to the connectivity proofs found in Section 4 of \cite{wahl12}. In order to prove \cref{thm: fern connectivity}, we will need to study the following related simplicial complex, where we allow ferns to also agree at marked points.
%We define $FA_n^\xi$ to be the simplicial complex with the same vertex set as $A_n^\xi$. On the other hand, the $p$-simplices are $(p+1)$- tuples of ferns that do not intersect except possibly at their endpoints. 

\begin{definition} \label{def: fan fern} $FA_n^\xi = FA_n^\xi(\surf, \Delta_1, \ldots \Delta_n)$ is the simplicial complex such that:
	\begin{itemize}
	\item vertices are isotopy classes of ferns; and
	\item $p$-simplices are collections of $(p+1)$ vertices that are disjoint except possibly at their endpoints.
	\end{itemize}
%For $0 \leq i \leq p$, the boundary maps 
%	\[\del_i : (F_n)^\xi_p \rightarrow (F_n)^\xi_{p-1}\]
%are given by forgetting the $i^{th}$ fern.
\end{definition}
The following is an analogue to a special case of the main theorem of Hatcher in \cite{hatcher91}, which proves contractibility for certain arc complexes of surfaces.

\begin{lemma} \label{lem: fern contractible} $FA_n^\xi$ is contractible.
\end{lemma}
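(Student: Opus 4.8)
The plan is to follow Hatcher's ``surgery on arcs'' or flow argument from \cite{hatcher91}, adapted to the fern setting, and to leverage the fact that $FA_n^\xi$ allows ferns to share endpoints so that the natural ``union'' operation with a fixed fern is always defined. Concretely, I would first reduce to showing that for any finite subcomplex $K \subseteq FA_n^\xi$ and any vertex $v$ (say, a fern whose arcs run to the marked points of $\Delta_1$ in some standard fashion), the inclusion $K \hookrightarrow FA_n^\xi$ is nullhomotopic, by constructing an explicit deformation: given a simplex $\langle v_0, \ldots, v_p\rangle$, one wants to say that $\langle v_0, \ldots, v_p, v\rangle$ is again a simplex. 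The point of working in $FA_n^\xi$ rather than $A_n^\xi$ is precisely that the representatives of the $v_i$ need only be disjoint away from their endpoints, so after an isotopy pushing everything off of $v$ except where forced to meet at $*$ or at a shared marked point, one gets a genuine simplex containing $v$. This shows $FA_n^\xi$ is a union of stars and hence that any sphere bounds, but to get contractibility cleanly I would instead set up a star-shaped / ``contract onto $v$'' homotopy directly.

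The cleanest route, I expect, is the standard trick (cf.\ the proof that flag-like arc complexes are contractible): build a self-map $c : FA_n^\xi \to FA_n^\xi$ sending each simplex $\sigma = \langle v_0, \ldots, v_p \rangle$ to $\langle v \rangle$, together with the ``join'' map $j(\sigma) = \sigma * v = \langle v_0, \ldots, v_p, v\rangle$, and observe that $j$ provides simplicial homotopies $\mathrm{id} \simeq j \simeq c$. For this to make sense I must check two things: (i) for every fern $w$ representing a vertex $v_i$, the union $v_i \cup v$ can be realised disjointly-except-at-endpoints, i.e.\ $\langle v_i, v\rangle$ is an edge of $FA_n^\xi$; and (ii) if $\langle v_0, \ldots, v_p\rangle$ is a simplex then so is $\langle v_0, \ldots, v_p, v\rangle$. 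Claim (i) is where the fern structure is essential: an individual fern is a $\xi$-tuple of arcs based at $*$, so two ferns always have $*$ in common, and any further intersections can be removed by isotopy except possibly at marked points --- but $FA_n^\xi$ explicitly permits endpoint intersections, so (i) holds unconditionally. Claim (ii) is the join-closure of stars and follows from (i) together with a simultaneous isotopy argument (an innermost-disk / bigon-removal induction on intersection number, exactly as in \cite{hatcher91}).

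The main obstacle will be making the simultaneous isotopy in (ii) precise: one must isotope all the $v_i$ at once so that they become disjoint from a single fixed representative of $v$ while remaining disjoint from each other away from endpoints, and then verify that the resulting assignment $\sigma \mapsto \sigma * v$ is well-defined on isotopy classes and is simplicial (respects faces). I would handle this by fixing, once and for all, a representative of $v$ in a collar of a chosen boundary arc near $*$ so that ``disjoint from $v$'' becomes an open condition one can achieve by a general-position isotopy supported away from the marked points of $\Delta_1$; bigon/half-bigon removal between each arc of each $v_i$ and the arcs of $v$ then terminates because the total geometric intersection number strictly decreases, and the removals for different $v_i$ do not interfere since the $v_i$ are already in minimal position with respect to one another. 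Finally, I would note that the same argument works verbatim for open surfaces by taking an exhausting sequence of compact subsurfaces, or simply observe that $FA_n^\xi$ depends only on the marked points and a boundary germ, so contractibility for the surface-with-boundary case suffices. A secondary subtlety worth a remark is that the construction of $v$ requires $\Delta_1$ to exist, which is fine since $n \geq 1$; the case $n = 0$ is vacuous or trivial.
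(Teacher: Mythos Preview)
Your argument has a genuine gap at claim (i): it is \emph{not} true that an arbitrary fern $w$ can always be isotoped to be disjoint from a fixed fern $v$ except at endpoints. Arcs on a surface with marked points can have \emph{essential} interior intersections, and bigon/half-bigon removal only eliminates inessential ones; once the arcs are in minimal position the process terminates, possibly with positive geometric intersection number. Concretely, already for $\xi=1$ and $S$ a disk with $n\ge 2$ marked points, there are infinitely many isotopy classes of arcs from $*$ to the marked points, while any collection of pairwise-disjoint such arcs is finite (an Euler-characteristic count bounds the size of a simplex in $FA_n^1$); since $FA_n^1$ is a flag complex, this forces some pair of vertices not to span an edge. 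On a surface of positive genus the failure is even more transparent: if $c$ is a nonseparating simple closed curve meeting an arc $\alpha$ once, then $T_c^k(\alpha)$ and $\alpha$ have growing minimal intersection number as $|k|\to\infty$. Thus $FA_n^\xi$ is \emph{not} the star of $v$, and the ``cone onto $v$'' homotopy $\sigma\mapsto \sigma * v$ is not defined on all of $FA_n^\xi$. (Your suggestion to place $v$ in a boundary collar cannot work either: the arcs of $v$ terminate at interior marked points.)

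This is exactly why the paper, following Hatcher, does not attempt to make ferns disjoint from $v$ but instead \emph{surgers} them: one fixes $v$, orders the intersection points of a simplex $\sigma$ with $v$, and at each step cuts the offending arc at the first intersection and reroutes the relevant piece along $v$ back to $*$ (choosing the side that still yields a genuine arc). Each surgery produces a new fern spanning a simplex with the old one, and the number of intersections with $v$ strictly decreases, so after finitely many steps the simplex lands in $\mathrm{Star}(v)$. Packaging these surgeries as a sequence of $(p{+}1)$-simplices $r_1(\sigma),\dots,r_k(\sigma)$ and pushing through them in barycentric coordinates gives a continuous deformation retraction of $FA_n^\xi$ onto $\mathrm{Star}(v)$. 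Your proposal recovers only the trivial case where $\sigma$ already lies in $\mathrm{Star}(v)$; the surgery step is the missing idea.
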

\begin{proof}
Fix a vertex $v$ of $FA_n^\xi$, and a representative fern, which we also call $v$, with arcs going from $*$ to $\Delta_1$ say. Moreover fix an ordering of the arcs of $v = (v_1, \ldots, v_\xi)$. We will show that $FA_n^\xi$ deformation retracts onto $\mathrm{Star}(v)$. Order the interior points of $v$ so that 
\begin{enumerate}
	\item $x \prec y$ if $x \in v_i, y \in v_j,$ and $i < j$; 
	\item If $i = j$ then $x\prec y$ if $x$ is closer to $*$ along $v_i$ than $y$.
\end{enumerate}

Let $\sigma = \langle a_0, \ldots, a_k \rangle$ be a simplex of $FA_n^\xi$. In particular the $a_i$ are $\xi$-tuples of arcs. Choose representatives ferns of $\sigma$ so that $a_0 \cup \ldots \cup a_k$ intersect $v$ minimally.

%% new bit begin
Consider the ferns of $\sigma$ that intersect $v$. Suppose that there are $k$ points of intersection and denote by $g_1, \ldots , g_k$ the germs of arcs that are constituents of ferns corresponding to those intersection points. By germ, we simply mean a small arc segment that intersects $v$. Moreover assume that $g_i \prec g_j$ for $i<j$, where $g_i \prec g_j$ if their corresponding intersection points with $v$ satisfy the inequality $\prec$. The $g_i$ are germs of the arcs of ferns $a_{j_i}$, where it is possible that $j_i = j_{i'}$ for $i \neq i'$ if the arc intersects $v$ more than once.

We will now describe a sequence of $k$, $(p+1)$-simplices $r_1(\sigma), \ldots ,r_k(\sigma)$ associated to $\sigma$. Our retraction will then simply be a map that carries $\sigma$ through these simplices.

If $\alpha_i$ is an arc intersecting the arc $v_i$ of a fern $v$ at a point $x$, and $x$ is the first intersection point of $\alpha_i$ according to the ordering $\prec$ of $v$, we can define $L(\alpha_i)$ and $R(\alpha_i)$ to be the new path obtained by cutting $\alpha_i$ at $x$ and joining the new endpoints to $*$ by travelling along the left and right hand side of $v_i$. Note that one of the paths of $L(\alpha_i)$ or $R(\alpha_i)$ will not be an arc since it will have one path with both endpoints at $*$. Call $C(\alpha_i)$ the one that is an arc. 

Now if $a = (\alpha_1, \ldots, \alpha_\xi )$ is a fern, we define $C(a)$ to be the new fern obtained by doing $C$ to the arc intersecting $v$ ``first" according to the ordering $\prec$ on $v$. For example, if $\alpha_1$ had an intersection with $v$ that occurred first according to $\prec$, then $C(a) = (C(\alpha_1), \alpha_2, \ldots , \alpha_\xi)$. Note that $C(\alpha_1)$ will still be a fern whose endpoints still go to the same $\Delta_i$ as $\alpha_1$ since endpoints do not change when doing $C$.

We will now use the operator $C$ on ferns to define our sequence of simplices. Let $r_i(\sigma)$ be the $(p+1)$-simplex given by
\[ r_i(\sigma) = \langle b_0, \ldots , b_{p+1} \rangle, \]
where $b_l = C^{\epsilon_i(l)}(a_l)$ for $l \leq p$, and $b_{p+1} = L^{\epsilon_i(j_i) + 1}(a_{j_i}) = L^{\epsilon_{i+1}(j_i)}(a_{j_i})$, and $\epsilon_i(l)$ is the number $j < i$ such that $g_j$ is a germ of $a_l$. \cref{pic: fern contract} gives an example for $\xi=2$ of this retraction process.
\begin{center}
\begin{figure}
		\includegraphics[scale=0.42]{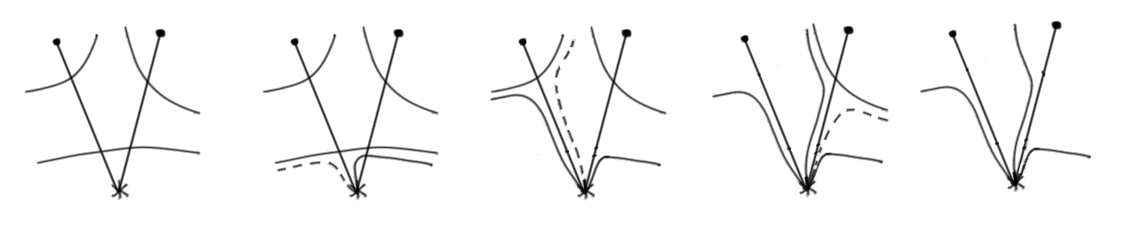}
\caption[A retraction of a simplex in $FA_{n,\xi}$]{An example (for $\xi = 2$) of germs of ferns of a that intersect $v$, on the left followed by the simplices $r_1, r_2, r_3$ of the retraction in the middle and finally the image of germs on the right. The dotted lines represent the discarded part of arcs during the cutting process.} \label{pic: fern contract}
\end{figure}
\end{center}
\vspace{-30pt}
Using barycentric coordinates, a point on $\sigma$ can be identified with the $p$-tuple $(t_0, \ldots, t_p)$, such that $\sum t_i = 1$. We interpret these coordinates as the fern $a_i$ having the weight $t_i$. Assign to the $i$th germ $g_i$ the weight $w_i = t_{j_i}/2$. For $\sum_{j=1}^{i-1} w_j \leq s \leq \sum_{j=1}^{i}w_j$, define $f: I \times FA_n \rightarrow FA_n$ by 
\[ f(s, [ \sigma, (t_0, \ldots, t_p)]) = [r_i(\sigma), (v_0, \ldots, v_{p+1})],\]
where the weight $v_i = t_i$ except for the pair 
\[ (v_{j_i} , v_{p+1}) = (t_{j_i} - 2(s - \sum_{j=1}^{i-1} w_j), 2(s - \sum_{j=1}^{i-1} w_j)). \]
The weight of $(b_{j_i},b_{p+1})$ changes from $(t_{j_i}, 0)$ to $(0, t_{j_i})$ as $s$ goes from $\sum_{j=1}^{i-1} w_j$ to $\sum_{j=1}^i w_j$. This means that the map pushes a face of a simplex through the simplex and onto another face. For $\sum_{i=1}^k w_i \leq s \leq 1$, define $f(s, [\sigma, (t_o, \ldots ,t_p)])$ to be constant, equal to 
\[f\left(\sum_{i=1}^k w_i, [\sigma, (t_0, \ldots, t_p)]\right).\] 
In particular $f(1, [\sigma, (t_0, \ldots, t_p)])$ lies in the face of $r_k(\sigma)$ which is in $Star(v)$. The map is continuous since going to a face of $\sigma$ corresponds to a $t_i$ and any corresponding $w_j$ going to zero.
\end{proof}

We now prove \cref{thm: fern connectivity} by using the contractibility of $FA_n^\xi$. For technical reasons we will need to make use of the following. Given a $p$-simplex $\sigma$ of $FA_n^\xi$ (or $A_n^\xi$), denote by $\surf_\sigma$ the subspace\footnote{$\surf_\sigma$ may not be a surface (with boundary) because of the point $*$. On the other hand, this will not be a problem since many of our constructions will still work in this setting.} $\surf - (\sigma - *) \subset \surf$. That is, it is the surface, $\surf$, with a representative fern of $\sigma$ removed, except for the point $*$ on the boundary. For spaces of the form $\surf_\sigma$, we define $A_n^\xi(\surf_{\sigma}, \Delta_1, \ldots, \Delta_n)$ and $FA_n^\xi(\surf_\sigma, \Delta_1, \ldots, \Delta_n)$ as in \cref{def: fern} and \cref{def: fan fern} , with $\surf$ replaced with $\surf_\sigma$ in both definitions, where the marked points of $\surf_\sigma$ are inherited from $\surf$ and the point $* \in \surf_\sigma$ is the point $* \in \del \surf$. The same arguments as in the proof of \cref{lem: fern contractible} can also be used to show that $FA_n^\xi(\surf_\sigma, \Delta_1, \ldots, \Delta_n)$ is contractible, which we will make use of in the following proof of \cref{thm: fern connectivity}.

\begin{proof}[Proof of \cref{thm: fern connectivity}] Recall we are trying to prove that $A_n^\xi(S, \Delta_1, \ldots, \Delta_n)$ is $n-2$ connected. We will actually prove the theorem for $\surf$ a surface or of the form $\surf_\sigma$, where $\sigma \in (A_n^\xi)_p$ is a $p$-simplex. The proof will proceed by induction on $n$. The base case of our induction requires us to show that $A_1^\xi(\surf, \Delta_1)$ is nonempty which is true as long as $\surf$ is connected.

For the inductive step, let $k \leq n -2$. Consider a map 
	\[ f: S^k \rightarrow A_n^\xi. \]
We want to show that $f$ factors through a $(k+1)$-disk. By contractibility of $FA_n^\xi$ we have a commutative diagram
	\[ \xymatrix{
		S^k \ar[r]^{f\;\;} \ar@{^{(}->}[d] &	A_n^\xi \ar@{^{(}->}[d] \\
		D^{k+1} \ar[r]^{\hat{f}\;\;}		&	FA_n^\xi \\
	}	\]
By simplicial approximation, we can triangulate $S^k$ and $D^{k+1}$ and take our maps to be simplicial. We want to deform $\hat{f}$ so that its image lies in $A_n^\xi$. A fern is \emph{coloured by $\Delta_i$} if the endpoints of its arcs are at $\Delta_i$. Call a simplex $\sigma = \langle a_0, \ldots, a_p \rangle \in D^{k+1}$ bad if the colour of each $\hat{f}(a_i)$ already appears as a colour of one of the $\hat{f}(a_j)$ for $i \neq j$.

Let $\sigma \in D^{k+1}$ be a bad simplex of maximal dimension, say $p$. Then $\hat{f}$ restricts to a map
	\[ \hat{f} |_{\mathrm{Link}(\sigma)} : \mathrm{Link}(\sigma) \rightarrow J_\sigma := A_{n'}^\xi(\surf_\sigma, \Delta'_0, \ldots, \Delta'_{n'}) \]
where the $\Delta'_i$ are the remaining partitions of marked points not at the endpoints of ferns  in $\hat{f}(\sigma)$. The image of the map $\hat{f}$ restricts to $A_{n'}^\xi(\surf_\sigma, \Delta'_0, \ldots, \Delta'_{n'})$ because if $\tau \in \mathrm{Link}(\sigma)$ was bad, then $\sigma * \tau$ would be a bad simplex in $D^{k+1}$ of larger dimension than $\sigma$ contradicting maximality of $\sigma$.

Now $n' \geq n - \lf (p+1)/2 \rf$, since a bad $p$-simplex can use up at most $\lf (p+1)/2 \rf$ $\Delta_i$'s. Also note that $p \geq 1$ since there are no bad vertices. By induction the connectivity of $J_\sigma$ is
	\begin{align*}
		\mathrm{conn}(J_\sigma)	&= n' - 2\\
							&\geq n - \lf \frac{p+1}{2} \rf - 2 \\
							&\geq n - 2 - p + \lf \frac{p-1}{2} \rf\\
							& \geq k - p.
	\end{align*}
Since the link of $\sigma$ is a $k-p$ sphere, we have a commutative diagram

\[	\xymatrix{
	\mathrm{Link}(\sigma) \ar[r] \ar@{^{(}->}[d]	&	J_\sigma \ar[r]	&	A^\xi_n(\surf, \Delta_0, \ldots \Delta_n) \\
	K \ar[ur]_{\hat{f}'} & &
	} \]

where $K$ is a $(k-p+1)$-disk with boundary $\del K = \mathrm{Link}(\sigma)$ and the right map is the map that identifies arcs on $\surf'$ with arcs on $\surf$.
In the triangulation of $D^{k+1}$, replace the $(k+1)$-disk, $\mathrm{Star}(\sigma) = \sigma * \mathrm{Link}(\sigma)$ with $\del \sigma * K$. We can do this since $\del \sigma *K$ and $\mathrm{Star}(\sigma)$ have the same boundary. Modify the map $\hat{f}$ by
	\[ \hat{f} * \hat{f}': \del \sigma * K \rightarrow FA_n^\xi(\surf, \Delta_0, \ldots, \Delta_n). \]
New simplices in $\del \sigma * K$ are of the form $\tau = \alpha * \beta$, where $\alpha$ is a proper face of $\sigma$ and $\beta$ is mapped to $J_\sigma$. Thus, if $\tau$ is a bad simplex in $\del \sigma * K$ then $\tau = \alpha$ since ferns of $\hat{f}'(\beta)$ do not share colours with other ferns of $\hat{f}'(\beta)$ or $\hat{f}(\alpha)$, so cannot contribute to a bad simplex. Since $\alpha$ is a proper face of $\sigma$, we have decreased the number of top dimensional bad simplices. The result follows by inducting on top dimensional bad simplices.\end{proof}

\section{The stabilisation map} \label{sec: stab}

In this section, we will define the stabilisation maps of \cref{thm: surface braid stability}. The basic example that one should have in mind is the stabilisation map for braid groups $\stab: \braid_n \rightarrow \braid_{n+1}$ which adds an unbraided strand.

Let $\surf$ be the interior of a surface $\bar{\surf}$ with boundary $\del \bar{\surf}$. Pick a point $b \in \del_0 \subset \del \bar{\surf}$, where $\del_0$ is a boundary component of $\del \bar{\surf}$.
On the level of ordered $\xi$-configuration spaces, we define a map 
	\[ s :\pconf_n^\xi(\surf) \rightarrow \pconf_{n+1}^\xi(\surf'), \]
where $\surf'$ is obtained from $\bar{\surf}$ by adding a collar neighbourhood, $\del_0 \times I,$ around $\del_0$. Let $\bm{q}$ be the subset of the collared neighbourhood given by 
	\[ \bm{q} = \{ (b, \frac{1}{\xi+1}), \ldots, (b, \frac{\xi}{\xi+1}) \}.\]
Then $s$ is defined by inserting this new subset $(\bm{p}_1, \ldots, \bm{p}_m) \mapsto (\bm{p}_1, \ldots, \bm{p}_m, \bm{q}).$ Picking an isomorphism $f: \surf \cong \surf'$, with support in a small neighbourhood of $\del_0$, we get a map $f \circ s :\pconf_n^\xi(\surf) \rightarrow \pconf_{n+1}^\xi(\surf').$
Define the \emph{stabilisation map} for $\xi$-configuration spaces to be the map
	 \[ \stab: \conf_n^\xi(\surf) \rightarrow \conf_{n+1}^\xi(\surf)\]
obtained by quotienting $f \circ s$ by the symmetric group action, that is, it is the map 
	\[ \stab  : \frac{\pconf_n^\xi(\surf)}{\sym_n}  \rightarrow \frac{\pconf_{n+1}^\xi(\surf)}{\sym_{n+1}}.\]

%\begin{remark} We have focused on the case for defining stabilisation maps for $\xi$-configuration spaces of surfaces. We note that such maps can also be defined for higher dimensional manifolds $M$, though whether this map satisfies homological stability is unknown.
%\end{remark}

\begin{remark} The stabilisation map for $\xi$-configurations spaces that we have defined corresponds to the map on the level of surface braid groups $\stab: \braid_n^\xi(\surf) \rightarrow \braid_{n+1}^\xi(\surf)$
which adds $\xi$ trivial strands. In particular, $\braid_n^\xi(\surf)$ includes into $\braid_{n+1}^\xi(\surf)$ as the inclusion of a stabiliser of a fern ending at the added marked points of $\conf_{n+1}^\xi(\surf)$.
\end{remark}
\begin{figure}
\begin{center}
\includegraphics[scale=0.40]{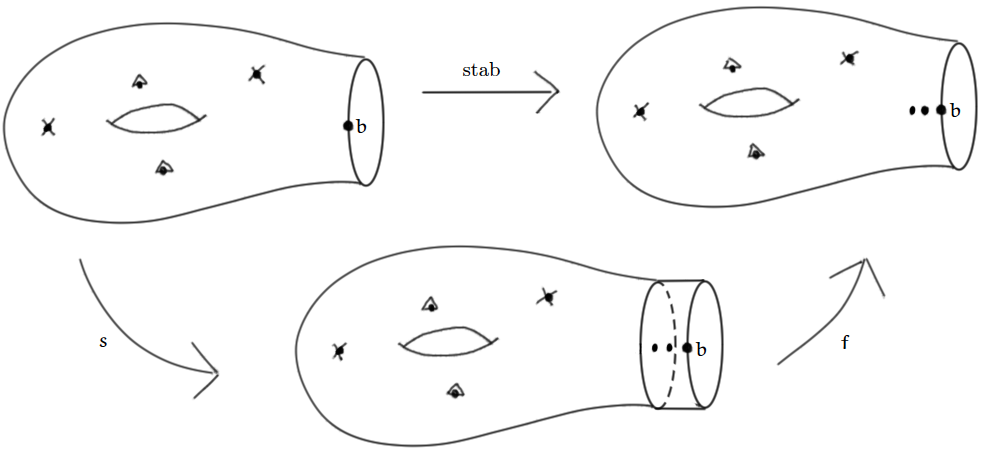}
\caption[The stabilisation map for $\xi$-configurations]{The stabilisation map $\stab: \conf_n^\xi(\surf) \rightarrow \conf_{n+1}^\xi(\surf)$ for $\xi=2$ and $n=2$}\label{fig: stab}
\end{center}
\end{figure}

\section{Homological stability for open surfaces} \label{sec: open}
The goal of this section is to prove \cref{thm: surface braid stability}. For convenience we recall the statement of the theorem.
\begin{theorem}\label{thm: surface braid stability} Let $\surf$ be a surface that is the interior of a surface with boundary. The map 
	\[ \stab_* : H_*(\braid_n^\xi(\surf); \mathbb{Z}) \rightarrow H_*(\braid_{n+1}^\xi(\surf); \mathbb{Z}) \]
is an isomorphism for $2* \leq n$.
\end{theorem}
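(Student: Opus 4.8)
The plan is to apply the Hatcher--Wahl axiomatic machine (Theorem 5.1 of \cite{hw10}) to the action of $\braid_n^\xi(\surf)$ on the fern complex $A_n^\xi(\surf)$, using \cref{thm: fern connectivity} as the required high-connectivity input. First I would set up the combinatorial data: a family of groups $G_n = \braid_n^\xi(\surf)$ together with the stabilisation maps $\stab: G_n \to G_{n+1}$ defined in \cref{sec: stab}, and identify $A_n^\xi$ as (a complex equivalent to) the semi-simplicial set of ``destabilisations'' that Hatcher--Wahl require. Concretely, a $p$-simplex of $A_n^\xi$ is a collection of $p+1$ disjoint ferns; picking one such fern exhibits $\braid_{n-1}^\xi(\surf_v)$ (the braid group of the complement of a fern $v$) as the stabiliser, and the point is that cutting along a fern reduces $n$ by exactly one while changing the surface only up to the homeomorphism type allowed by the stabilisation map. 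I would verify that the $\braid_n^\xi(\surf)$-action on $A_n^\xi$ is transitive on $p$-simplices up to the relevant connectivity range (using the change-of-coordinates principle for arcs on surfaces, i.e.\ that any two systems of disjoint ferns with the same endpoint data are related by a mapping class, which descends to the braid group), and that the stabiliser of a simplex is conjugate to the image of an iterated stabilisation map — this is exactly the content needed to feed the spectral sequence argument.

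The key steps, in order, are: (1) state precisely the version of Hatcher--Wahl Theorem 5.1 being invoked, including its hypotheses (a highly connected simplicial complex with a group action, transitivity, stabilisers equal to smaller groups in the family, and a commutativity condition relating the two ways of stabilising twice); (2) build from the fern complexes a semi-simplicial resolution of a point for each $\braid_n^\xi(\surf)$ and check the action-theoretic hypotheses using change of coordinates for ferns; (3) invoke \cref{thm: fern connectivity}, which says $A_n^\xi$ is $(n-2)$-connected, to supply the connectivity hypothesis — noting that the slope in the conclusion $2* \le n$ matches a connectivity bound that is linear of slope $1/2$ in $n$; (4) run the Hatcher--Wahl machine to conclude that $\stab_*: H_k(\braid_n^\xi(\surf)) \to H_k(\braid_{n+1}^\xi(\surf))$ is surjective for $2k \le n+1$ and an isomorphism for $2k \le n$; (5) record the integral statement. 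Throughout, the identification $H_*(\conf_n^\xi(\surf);\mathbb{Z}) \cong H_*(\braid_n^\xi(\surf);\mathbb{Z})$ from the introduction lets us phrase everything either space-level or group-level.

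The main obstacle I expect is step (2): verifying that the fern complex genuinely fits the Hatcher--Wahl framework, in particular that the stabiliser of a fern (or a simplex of ferns) in $\braid_n^\xi(\surf)$ is precisely (a conjugate of) the image of $\braid_{n-1}^\xi$ under an appropriate stabilisation map, and that cutting the surface along a fern yields a surface of the same homeomorphism type as the one obtained by the stabilisation construction of \cref{sec: stab} (adding a collar and $\xi$ new points). This requires a careful change-of-coordinates argument for ferns — one must check that the complement $\surf_\sigma$ of any simplex $\sigma$ of ferns is, up to homeomorphism fixing the boundary, independent of $\sigma$ and agrees with the destabilised surface, and that the $\Sigma_n$-quotient in the definition of $\conf_n^\xi$ interacts correctly with the braid group action on ferns. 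The analogous verifications in \cite{wahl12} and \cite{hw10} are the template, and since $\surf$ is open (the interior of a surface with boundary) there are no closed-surface complications here; the role of boundary connected sum in the classical braid case is played by attaching a collar and inserting the standard fern $\bm{q}$. Once these identifications are in place, the homological stability conclusion follows formally, and the integral coefficients come for free because the fern complex connectivity theorem is proved integrally.
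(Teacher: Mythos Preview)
Your overall strategy---apply Hatcher--Wahl Theorem~5.1 to the action of $\braid_n^\xi(\surf)$ on the fern complex, with \cref{thm: fern connectivity} supplying connectivity---is exactly what the paper does. However, two concrete steps in your outline would not go through as written.

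First, the action of $\braid_n^\xi(\surf)$ on $A_n^\xi$ is \emph{not} transitive on $p$-simplices for $p\ge 1$. The germs of the arcs at the basepoint $*$ carry an ordering that the braid action cannot alter, so (for instance when $\xi=2$) the $1$-simplices with arc-orderings $(0,0,1,1)$, $(0,1,0,1)$, $(0,1,1,0)$ lie in distinct orbits. What Hatcher--Wahl actually require is that $H_i(A_n^\xi/\braid_n^\xi)=0$ in a range, and the paper proves this not by transitivity but by identifying the orbit set of $p$-simplices with certain orderings of a multiset and then writing down an explicit chain nullhomotopy (a ``shift and prepend zeros'' operator) on the resulting chain complex. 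Your change-of-coordinates intuition is correct only at the level of vertices; for higher simplices you need this combinatorial argument, which your plan does not anticipate.

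Second, the range you claim in step~(4) is too optimistic. With connectivity $n-2$, Theorem~5.1 of \cite{hw10} yields isomorphisms only for $*\le n/2-1$ and surjections for $*\le n/2$, not isomorphisms for $2*\le n$. The paper closes this gap by a separate argument (\cref{prop: surface braid transfer}): the transfer maps $t_{j,k}$ satisfy relations of Dold type with the stabilisation maps, and Dold's lemma then shows $\stab_*$ is \emph{always} split injective in homology. Combining this injectivity with the surjectivity from Hatcher--Wahl gives the full range $2*\le n$. Your step~(5) omits this, so as written your argument would only reach $2*\le n-2$.
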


\begin{proof}
For brevity, we write $\braid_n^\xi$ for $\braid_n^\xi(\surf)$. By Theorem 5.1 of \cite{hw10}, it suffices to check the following conditions to prove that we have isomorphisms in the range $* \leq n/2-1$ and surjections for $* \leq n/2$.
	\begin{enumerate}
		\item The action of $\braid_n^\xi$ on $A_n^\xi$ is transitive on vertices; the stabiliser of each simplex fixes the simplex point wise; and $H_i(A_n^\xi / \braid_n^\xi) = 0$ for $1 \leq i \leq n - 2$.
		\item The subgroup of $\braid_n^\xi$ fixing a $p$-simplex pointwise is conjugate to $\braid_{n - \xi(k + 1)}^\xi$ for some $k \leq p$.
		\item For each edge of $A_n^\xi$ with vertices $v$ and $w$ there exists an element of $\braid_n^\xi$ that takes $v$ to $w$ and that commutes with elements of $\braid_n^\xi$ that leave the edge fixed pointwise.
	\end{enumerate}
	
For the first condition, note that a vertex of $A_n^\xi$ can be identified uniquely with $\xi$-simplices in $A_{n\xi}$, where $A_{n\xi} := A_{n\xi}^1$ is the arc complex of a disk. It is well known that the action of $\braid_n$ on $(A_{n\xi})$ is transitive. Moreover, if the simplices have all arcs going to the same $\Delta_i$, it is possible to choose the element of $\braid_n$ take one simplex to another to preserve the $\Delta_i$ so that it is in $\braid_n^\xi$, which shows that the action of $\braid_n^\xi$ on $A_n^\xi$ is transitive on vertices.

The stabiliser of a simplex fixes the simplex pointwise as the order in which arcs appear at the base point cannot be changed. To see that $H_i(A_n^\xi / \braid^\xi_n)= 0$ for $1 \leq i \leq n-2$ we use an argument from the proof of Lemma 3.3 in \cite{harer85}. The aim is to consider the chain complex $\mathbb{Z}(A_n^\xi / \braid^\xi_n)_p$ (we will describe the differentials momentarily) and construct a chain nullhomotopy.

It is possible to identify the orbit of a $p$-simplex $\sigma$ so that there is a correspondence between $A_n^\xi / \braid^\xi_n$ and orderings of the multiset $\{ 0, \ldots, 0, \ldots, p, \ldots, p\}$ where the first $0$ appears before the first $1$, the first $1$ appears before the first $2$ and so on and each number appears $\xi$ times. For example $(0,1,1,2,0,2)$ is an allowed ordering but $(0,2,0,1,1,2)$ is not. To see this correspondence, we put an ordering on the vertices of $A_n^\xi$ so that for two ferns $\sigma$ and $\tau$ in minimal position, $\sigma < \tau$ if the leftmost arc of $\sigma$ is further to the left than the leftmost arc of $\tau$. Here by leftmost we refer to the left to right ordering of the tangent vectors of the arcs at the base point $*$. With this ordering, $A_n^\xi$ is an ordered simplicial complex. Let $\langle a_0, \ldots a_p \rangle$ be a $p$-simplex in $A_n^\xi$. Labelling all the arcs of $a_i$ by the number $i$ and reading these labels off from left to right at $*$ gives rise to the corresponding ordering of the multiset $\{ 0, \ldots, 0, \ldots, p, \ldots, p\}$. Two ferns that give rise to different orderings of multisets are in different $\braid_n^\xi$ orbits since the action of $\braid_n^\xi$ cannot change the ordering of arcs at $*$. On the other hand, simplices giving rise to the same ordering are in the same orbit by an argument similar to the transitivity argument for vertices. See \cref{pic: fern hom} for a picture of two ferns in the same orbit.
\begin{center}
\begin{figure}
		\includegraphics[scale=0.28]{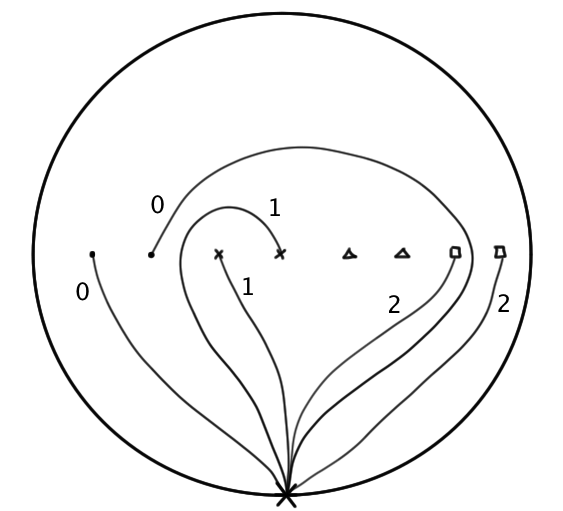}
		\includegraphics[scale=0.28]{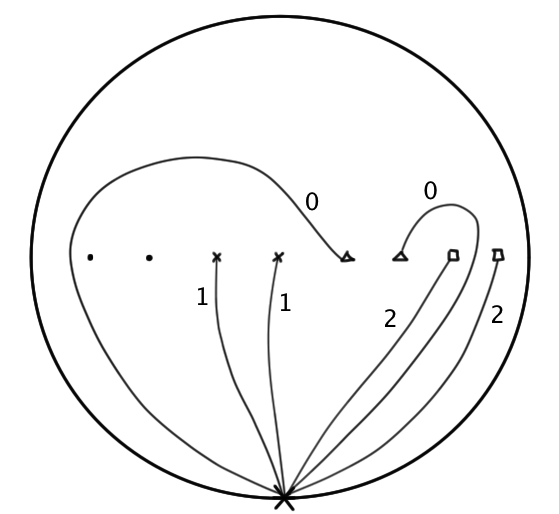}
\caption{Two ferns in the orbit labelled by $(0, 1, 1, 2, 0 , 2)$} \label{pic: fern hom}
\end{figure}
\end{center}
\vspace{-25pt}

For $ 0 \leq i \leq p$, with the above description of simplices in the ordered fern complex $A_n^\xi$, the face maps of $A_n^\xi$ as an ordered simplicial complex are given by
	\[ f_i (\sigma) = \forget_i(\sigma) \]
where $\forget_i$ forgets the $i$'s in $\sigma$ and then subtracts $1$ from all the numbers greater than $i$. For example, $f_1(0, 0, 2, 3, 2, 1,1, 3) = (0,0, 1, 2, 1, 2)$. Thus the differentials of the chains on $A_n^\xi$, $d : \mathbb{Z}(A_n^\xi / \braid_n^\xi)_p \rightarrow \mathbb{Z}(A_n^\xi / \braid_n^\xi)_{p-1}$ are given by the map
	\[ d(\sigma) = \sum_j (-1)^j f_j(\sigma) . \]
For $0 \leq p \leq n - \xi$, define $D : (A_n^\xi / \braid^\xi_n)_p \rightarrow (A_n^\xi / \braid_n^\xi)_{p+1}$ by taking a $p$-simplex $\sigma$, adding one to every entry and then putting $\xi$ zeroes in front. For example $D(0 , 0  ,2 , 2, 1, 1) = (0, 0 , 1, 1, 3, 3, 2, 2)$. We now have that $Dd + d D = id$ so the identity map is chain homotopic to zero in the range $0 \leq p \leq n - \xi$. Thus $H_i(A_n^\xi / \braid^\xi_n) = 0 $ for $1 \leq i \leq n-\xi$.

For the second condition, we get the conjugation by identifying the stabiliser of a $p$-simplex with the $\xi$-braid group that acts on the cut surface where we cut along the arcs of the $p$-simplex.

Finally for the last condition, the action of $\braid^\xi_n$ that takes a vertex $v$ to a vertex $w$ is supported on a tubular neighbourhood of $v \cup w$.

Theorem 5.1 of \cite{hw10} now implies that we have isomorphisms in the range $* \leq n/2 - 1$ and an epimorphism for $* \leq n/2$. In \cref{prop: surface braid transfer}, we will in fact show that the maps 
 \[ \stab: \braid_n^\xi(\surf) \rightarrow \braid_{n+1}^\xi(\surf) \]
are always injective in homology and this gives the full result.\end{proof}

\section{Homological stability for closed surfaces} \label{sec: braid stability closed}
When $\surf$ is a closed surface, it is not possible to define the stabilisation maps that we have been using to prove our homological stability theorems. The main issue is that we no longer have a boundary from which to push in new points. Indeed, even for the usual configuration space, integral homological stability for configuration spaces of closed manifolds does not hold (see for example \cite{fvb62} from which one can compute $H_1(\conf_n(S^2); \mathbb{Z}) \cong \mathbb{Z}/(2n-2)$). On the other hand if one considers rational homology, then Church and Randal-Williams \cite{church12, orw13} prove that homological stability for configuration spaces of closed manifolds holds rationally.

In this section, we will prove the analogous result for closed surfaces and $\xi$-configuration spaces (or equivalently $\xi$-braid groups). We use a technique of Randal-Williams in \cite{orw13} where he proves homological stability for configuration spaces of closed manifolds, using homological stability for open ones. This will involve defining transfer maps which realise the homology isomorphisms. These maps will be more naturally defined in terms of $\xi$-confiugration spaces rather than $\xi$-braids. We define these transfer maps as follows.

For $m < n$, let $\conf_{n,m}^\xi(\surf)$ be the $\xi$-configuration space where the $n$ subsets have been partitioned into subsets of size $m$ and $n-m$. There is a covering map
	\[ p : \conf_{n,m}^\xi(\surf) \rightarrow \conf_n^\xi(\surf) \]
obtained by forgetting the partitioning.
There is a transfer map
	\[ t_{n,m} :H_*(\conf_n^\xi(\surf) ;\mathbb{Z}) \rightarrow H_*(\conf_{n,m}^\xi(\surf); \mathbb{Z}) \rightarrow H_*(\conf_m^\xi(\surf); \mathbb{Z}) \]
where the first map is the transfer map associated to the covering $p$, and the second is the map induced by 
	\[ f: \conf_{n,m}^\xi(\surf) \rightarrow \conf_m^\xi(\surf),\]
which forgets down to $m$ subsets. We use the notation $t_n$ to denote $t_{n, n-1}$

\begin{remark} There is also a way to describe the transfer map on the level of spaces using the identification $H_*(X) \cong \pi_*(\symp^\infty(X))$ from the Dold-Thom theorem. Given an $n$-fold covering $\pi: E \to B$, there is a map $\tau : B \rightarrow \symp^n(E)$,
given by sending a point in $b$ to $\pi^{-1}(b)$. The transfer map on homology is defined as the map $H_*(B) \xrightarrow{\tau_*} H_*(E)$, given by the induced map on $\pi_*$ of $\symp^\infty (\tau) : \symp^\infty B \rightarrow \symp^\infty (E)$,
where we have identified $\symp^\infty(\symp^n(E))$ with $\symp^\infty(E)$ and then used the Dold-Thom theorem. We will make use of this description of the transfer map in the proof of \cref{thm: surface braid closed}.
\end{remark}

\begin{proposition}\label{prop: surface braid transfer} Let $\surf$ be a surface that is the interior of a surface with boundary. The map
	\[ (\stab_n)_* : H_*(\conf_n^\xi(\surf) ; \mathbb{Z}) \rightarrow H_*(\conf_{n+1}^\xi(\surf); \mathbb{Z}) \]
is always split injective. 
Moreover, the transfer map $t_n$ is a rational isomorphism whenever $\stab_{n-1}$ is.
\end{proposition}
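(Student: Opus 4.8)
The plan is to deduce both parts from a single transfer identity, proved via the $\symp^\infty$-model of the transfer recalled in the remark above. Write $s_n$ for $(\stab_n)_*$. I claim that, as self-maps of $H_*(\conf_n^\xi(\surf);\mathbb{Z})$,
\[ t_{n+1}\circ s_n \;=\; \mathrm{id}\;+\; s_{n-1}\circ t_n. \]
To see this, push the class $s_n(x)$ through $\tau\colon \conf_{n+1}^\xi(\surf)\to \symp^{n+1}\!\big(\conf_{n+1,n}^\xi(\surf)\big)$ and then through $\symp^{n+1}(f)$. Over a stabilised configuration $c\sqcup\{\bm q\}$ the fibre of the covering $p\colon \conf_{n+1,n}^\xi(\surf)\to\conf_{n+1}^\xi(\surf)$ consists of the $n+1$ ways to declare one of the subsets to be the singleton: the sheet singling out $\bm q$ is sent by $f$ to $c$, i.e.\ to the class $x$, and the remaining $n$ sheets are sent to the configurations $(c\setminus\bm p_j)\sqcup\{\bm q\}$. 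Since $c\mapsto\sum_j[\,c\setminus\bm p_j\,]$ is exactly the $\symp$-representative of $t_n$, applying $\symp^{n}(\stab_{n-1})$ identifies the combined contribution of those $n$ sheets with $s_{n-1}\circ t_n(x)$, which yields the identity.

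Now for the algebra. Let $w:=s_{n-1}\circ t_n$, and more generally let $w_j$ be the endomorphism of $H_*(\conf_n^\xi(\surf))$ given by transferring down $j$ steps and then stabilising up $j$ steps, so $w_0=\mathrm{id}$ and $w_1=w$. Repeated use of the displayed identity gives the recursion $w_1 w_j = j\,w_j+w_{j+1}$, hence $w_j=w_1(w_1-1)\cdots(w_1-j+1)$; and $w_{n+1}=0$, because $w_n$ factors through $H_*(\conf_0^\xi(\surf))=H_*(\mathrm{pt})$ and so is zero in positive degrees, while on $H_0$ the transfer $t_n$ acts as multiplication by $n$, so $w_1-n$ vanishes there. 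Thus $w_1$ is annihilated by $X(X-1)\cdots(X-n)$, so its eigenvalues lie in $\{0,1,\dots,n\}$; in particular $-1$ is not among them, and (the homology groups involved being finitely generated) $t_{n+1}\circ s_n=\mathrm{id}+w_1$ is invertible over $\mathbb{Q}$. This gives the rational statement at once: if $\stab_{n-1}$ is a rational isomorphism, then, using the displayed identity with $n-1$ in place of $n$ and the same annihilating‑polynomial argument for $s_{n-2}\circ t_{n-1}$,
\[ t_n=\big(t_n\circ s_{n-1}\big)\circ s_{n-1}^{-1}=\big(\mathrm{id}+s_{n-2}\circ t_{n-1}\big)\circ s_{n-1}^{-1} \]
is a composite of rational isomorphisms.

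For the integral split injectivity of $\stab_n$, the displayed identity already supplies a candidate retraction: if $\mathrm{id}+w_1$ is invertible over $\mathbb{Z}$ then $(\mathrm{id}+w_1)^{-1}\circ t_{n+1}$ is a left inverse of $s_n$. In homological degree $0$ both groups are $\mathbb{Z}$ and $s_n$ is an isomorphism, so there is nothing to prove; in positive degrees one needs $\mathrm{id}+w_1$ to be invertible over $\mathbb{Z}$, equivalently $w_1$ to act nilpotently on reduced homology. This is the step I expect to be the main obstacle: the eigenvalue estimate above only yields invertibility over $\mathbb{Q}$, so to obtain the genuine splitting (and not merely injectivity) one wants an integral grading on $\bigoplus_n H_*(\conf_n^\xi(\surf);\mathbb{Z})$ that the transfer maps strictly lower, so that $w_1$ is strictly grading‑decreasing, hence nilpotent, on $\widetilde H_*$. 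An alternative is to factor $\stab_n$ as $\conf_n^\xi(\surf)\to\conf_{n+1,n}^\xi(\surf)\xrightarrow{p}\conf_{n+1}^\xi(\surf)$, where the first map adjoins $\bm q$ as the singleton and is split by $f$, and then to argue that $p_*$ does not collapse the resulting direct summand; either way, controlling the positive‑degree behaviour of $w_1$ integrally is the crux.
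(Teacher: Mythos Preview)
Your derivation of the relation $t_{n+1}\circ s_n=\mathrm{id}+s_{n-1}\circ t_n$, the recursion $w_1 w_j=jw_j+w_{j+1}$, and the annihilating polynomial $w_1(w_1-1)\cdots(w_1-n)=0$ are all correct, and the rational conclusions you draw from them are fine. This is essentially the same mechanism the paper uses for the rational statement: the paper records the identical relation, observes that $t_{j+1}\circ s_j$ acts (on each Dold summand) by $1+k$ for some $k\in\{0,\ldots,n\}$, hence is a rational isomorphism, and deduces that $t_{j+1}$ is a rational isomorphism whenever $s_j$ is.

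The genuine gap is exactly where you flag it: integral split injectivity of $s_n$. Your approach tries to manufacture a retraction out of $(\mathrm{id}+w_1)^{-1}\circ t_{n+1}$, but the polynomial identity only forces the eigenvalues of $w_1$ into $\{0,\ldots,n\}$, and $1+k$ is not a unit in $\mathbb{Z}$ for $k\geq 1$; there is no reason for $w_1$ to be nilpotent on reduced homology, and in fact it is not in general. The ``strictly grading-decreasing'' heuristic you propose does not hold either, since the transfer followed by stabilisation preserves homological degree.

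What the paper does instead is bring in the \emph{multi-step} transfers $t_{j,k}$ for all $k<j$, not just the iterated one-step transfers. These satisfy $t_{k+1}\circ\cdots\circ t_j=(j-k)!\,t_{j,k}$, so $t_{j,k}$ is an integral ``divided power'' that your $w_j$ cannot see (your $w_j$ equals $j!$ times $s^{[j]}\circ t_{n,n-j}$). Together with the refined relation $t_{j,k}\circ s_{j-1}=s_{j-2}\circ t_{j-1,k-1}+t_{j-1,k}$, this places the situation squarely in the hypotheses of \cite[Lemma~2]{Dold62}, which produces an integral direct-sum decomposition $A_n\cong\bigoplus_k B_k$ with $s_n$ the inclusion of a summand. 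That is the missing idea: the splitting is built from the divided transfers $t_{j,k}$, not from inverting $\mathrm{id}+w_1$.
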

\begin{proof}
For brevity let $s_n$ denote the stabilisation map $(\stab_n)_*$.
From the definition of the transfer, we see that the maps $s$ and $t$ satisfy the relations
	\[ t_j \circ s_{j-1} = s_{j-2} \circ t_{j-1} + id. \]
More generally, they satisfy
	\[ t_{j,k}\circ s_{j-1} = s_{j-2} \circ t_{j-1, k-1} + t_{j-1, k}. \]
Furthermore,
	\[ t_{k+1} \circ \cdots \circ t_j = (j-k)!t_{k,j}. \]
Letting $A_j = H_i(\conf_n^\xi(\surf); \mathbb{Z})$ and $B_j := \mathrm{coker}(s_{j-1}),$ we are now in the situation of \cite[Lemma 2]{Dold62}, which implies that the $s_j$ are split injective and that
	\[ t_{j+1} \circ s_j \]
is multiplication by a nonzero constant. On homology, this is rationally an isomorphism so $t_{j+1}$ is an isomorphism whenever $s_j$ is an isomorphism which gives the desired bound.
\end{proof}

We have now completed the proof of \cref{thm: surface braid stability}, using the transfer maps to show that $\stab: \conf_n^\xi(\surf) \rightarrow \conf_{n+1}^\xi(\surf)$ is always injective and is rationally inverse to the transfer map in a range. 

Since the transfer map does not require us to add points to our surface, it makes sense to talk about the transfer map even for closed manifolds. In this way, we get a map $t_n:H_*(\conf_n^\xi(\surf)) \rightarrow H_*(\conf_{n-1}^\xi(\surf))$ and we can ask when this map is an isomorphism. In general, stability does not hold with integral coefficients. However, if we instead work with rational coefficients then we can prove the following.

\begin{theorem} \label{thm: surface braid closed} Let $\surf$ be a surface, open or closed. 
	\[ t : H_k(\conf_n^\xi(\surf) ; \mathbb{Q}) \rightarrow H_k(\conf_{n-1}^\xi(\surf) ; \mathbb{Q}) \]
 is an isomorphism for $2k \leq n-1 $.
\end{theorem}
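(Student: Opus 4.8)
The plan is to follow the technique of Randal-Williams \cite{orw13}, deducing the closed case from the open case already in hand via \cref{thm: surface braid stability} and \cref{prop: surface braid transfer}. Fix a point $z \in \surf$ and set $\surf^\circ := \surf \setminus \{z\}$; this is the interior of a surface with boundary, so for $\surf^\circ$ the stabilisation maps $\stab$ on $\conf_\bullet^\xi(\surf^\circ)$ are split injective on homology and isomorphisms in the stable range, the transfers $t^\circ_n$ are rational isomorphisms there, and both satisfy the relations recorded in the proof of \cref{prop: surface braid transfer}. The inclusion $\surf^\circ \hookrightarrow \surf$ induces maps $\iota_n \colon \conf_n^\xi(\surf^\circ) \to \conf_n^\xi(\surf)$; since the transfer $t_n$ is assembled from the covering $\conf_{n,n-1}^\xi \to \conf_n^\xi$ and the forgetting map, both of which are natural for codimension-zero inclusions, $\iota_n$ intertwines $t^\circ_n$ with $t_n$. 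So I get a comparison ladder, and the problem reduces to understanding, rationally and in a range, the discrepancy between $\conf_n^\xi(\surf)$ and $\conf_n^\xi(\surf^\circ)$.

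To control that discrepancy I would use the Dold--Thom description of the transfer recorded above together with a scanning argument. Applying $\symp^\infty(-)$ and scanning over $\surf$, the transfer is realised as a fibrewise map of section spaces; the closed surface corresponds to all sections over $\surf$ and the open surface $\surf^\circ$ to sections compactly supported away from $z$, so the difference is localised at a single disc neighbourhood $D_z$ of $z$, producing a (co)fibration sequence whose ``error term'' is the configuration space of $\surf^\circ$ consisting of $n-1$ subsets of size $\xi$ together with one subset of size $\xi - 1$ (this being the locus of configurations of $\conf_n^\xi(\surf)$ having exactly one point in $D_z$). This gives a long exact sequence computing $H_*(\conf_n^\xi(\surf);\mathbb{Q})$ from $H_*(\conf_n^\xi(\surf^\circ);\mathbb{Q})$ and the rational homology of these mixed-size configuration spaces, compatibly with the transfers. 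Feeding in the open-surface results --- in a mildly generalised ``one smaller subset'' version that the fern-complex machinery of \cref{sec: fern complex} produces with only cosmetic changes --- and then chasing the diagram exactly as in the proof of \cref{prop: surface braid transfer} (so ultimately resting on \cite[Lemma 2]{Dold62}) yields that $t\colon H_k(\conf_n^\xi(\surf);\mathbb{Q}) \to H_k(\conf_{n-1}^\xi(\surf);\mathbb{Q})$ is an isomorphism for $2k \le n - 1$.

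The main obstacle will be this comparison step. For $\xi = 1$ it is Randal-Williams's argument essentially unchanged; for $\xi \ge 2$ one must in addition (i) establish homological stability for the mixed configuration spaces carrying a single subset of size $\xi-1$ --- cleanest by re-running the connectivity argument of \cref{thm: fern connectivity} in that setting --- and (ii) verify that the circle of framings along $\del D_z$ and the $\pi_1(\surf)$-action on the homology of the scanning fibre contribute nothing rationally, so that the long exact sequence is clean enough for the diagram chase to go through. I expect both points to be routine but not entirely free.
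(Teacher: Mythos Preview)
Your route is not the one the paper takes, and the sketch has a real gap.  The paper follows Randal--Williams's \emph{semi-simplicial resolution}: one forms the augmented semi-simplicial space $D_n^\xi(\surf)_\bullet$ whose $i$-simplices are a $\xi$-configuration together with $i+1$ auxiliary ordered points, shows $\norm{D_n^\xi(\surf)_\bullet}\simeq \conf_n^\xi(\surf)$, and observes that each level $D_n^\xi(\surf)_i$ fibres over $\pconf_{i+1}(\surf)$ with fibre $\conf_n^\xi(\surf\setminus\{i+1\text{ points}\})$, an \emph{open} surface.  A fibrewise transfer then gives a map of the associated spectral sequences, and the open-surface result (\cref{prop: surface braid transfer}) on the fibres yields the isomorphism on $E^1$-pages in the claimed range.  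No scanning, no mixed-size configurations, no single puncture.

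What you describe is a different mechanism, and the step that fails is the asserted cofibration sequence.  Stratifying $\conf_n^\xi(\surf)$ by how many of the $n\xi$ points lie in the disc $D_z$ does not produce a single long exact sequence with error term ``$n-1$ subsets of size $\xi$ and one of size $\xi-1$''; that is only the first of many strata, and the higher ones (two or more points in $D_z$) contribute further mixed configuration spaces with several deficient subsets.  So you would face a full spectral sequence, not a three-term sequence, and would need stability results for an entire family of mixed $\xi$-configuration spaces, none of which are supplied by the paper's fern-complex argument without substantial extra work.  The Dold--Thom/scanning packaging you invoke does not collapse this filtration, and the framing/$\pi_1$ issues you flag in (ii) pertain to the McDuff--Segal comparison with section spaces rather than to the puncture comparison you are actually attempting.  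In short: the moral ``reduce to the open case by removing points'' is right, but the correct implementation is the semi-simplicial one above, which sidesteps all of these complications.
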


The proof of \cref{thm: surface braid closed} will be similar to the proof of Proposition 9.4 in \cite{orw13}. We will make use of the following construction.

Let
	\[ D_n^\xi(\surf)_i := \{( \underline{c}, p_0, \ldots, p_i) \in \conf^\xi_n(\surf) \times \surf^{i+1} \st p_j \not\in \underline{c } \mbox{ and }  p_j \neq p_k \}. \]
The notation $p_j \not\in \underline{c}$ means the points $p_j$ do not lie in any of the subsets that make up $\underline{c}$. There are face maps $\del_j : D_n^\xi(\surf)_i \rightarrow D_n^\xi(\surf)_{i-1}$ obtained from forgetting $p_j$. Moreover there is an augmentation map $\epsilon: D_n^\xi(\surf)_\bullet \rightarrow \conf_n^\xi(\surf)$ which forgets all the $p_j$ so that $D_n^\xi(\surf)_\bullet$ is an augmented semi-simplicial space. 

We can similarly define the augmented semi-simplicial space
 \[ D_{n,1}^\xi(\surf)_i := \{( \underline{c}, p_0, \ldots, p_i) \in \conf^\xi_{n,1}(\surf) \times \surf^{i+1} \st p_j \not\in \underline{c } \mbox{ and }  p_j \neq p_k \}. \]

We will make use of the following lemma which follows from the proof of Lemma 2.1 in \cite{orw10}.
\begin{lemma} Let $\norm{X_\bullet} \rightarrow X$ be an augmented (semi-)simplicial space, $f: X_n \rightarrow X$ be the unique face map and let $x \in X$. If $f$ is a Serre fibration, then
	\[ \norm{ f^{-1}(x)_\bullet} \rightarrow \norm{X_\bullet} \rightarrow X \]
is a homotopy fibre sequence. \label{lem: fibre sequence}
\end{lemma}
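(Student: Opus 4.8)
The plan is to follow the proof of Lemma~2.1 of \cite{orw10}. Write $f_p\colon X_p\to X$ for the map obtained by composing any chain of face maps down to $X_0$ with the augmentation to $X$; these coincide by the augmented semi-simplicial identities, and the standing hypothesis is that each $f_p$ is a Serre fibration (equivalently, that the augmentation $X_0\to X$ and every face map $X_p\to X_{p-1}$ is a Serre fibration), so that $f\colon X_\bullet\to X$ is a levelwise Serre fibration onto the constant semi-simplicial space $X$. The face maps restrict to $f_p^{-1}(x)$, making $f^{-1}(x)_\bullet$ a sub-semi-simplicial space of $X_\bullet$, so $\|f^{-1}(x)_\bullet\|$ is a subspace of $\|X_\bullet\|$ and the augmentation $\epsilon\colon\|X_\bullet\|\to X$ restricts to the constant map $x$ on it.

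First I would observe that this subspace is exactly the point-set fibre of $\epsilon$ over $x$: a point of $\|X_\bullet\|$ is represented by a pair $(t,y)$ with $y\in X_p$ and $t\in\Delta^p$, and $\epsilon$ sends it to $f_p(y)$, so $\epsilon^{-1}(x)=\{(t,y)\st f_p(y)=x\}=\|f^{-1}(x)_\bullet\|$. Hence it suffices to show that $\epsilon$ is a quasi-fibration over $x$, i.e.\ that the comparison map from the point-set fibre to the homotopy fibre over $x$ is a weak equivalence; that is precisely the assertion that $\|f^{-1}(x)_\bullet\|\to\|X_\bullet\|\to X$ is a homotopy fibre sequence.

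For this I would induct up the skeletal filtration $\emptyset=F_{-1}\subseteq F_0\subseteq F_1\subseteq\cdots\subseteq\|X_\bullet\|$, where $F_p$ is the pushout of $F_{p-1}\hookleftarrow\partial\Delta^p\times X_p\hookrightarrow\Delta^p\times X_p$. Projecting to $X$, both $\Delta^p\times X_p\to X$ and $\partial\Delta^p\times X_p\to X$ are Serre fibrations, being products of the Serre fibration $f_p$ with $\Delta^p$ and with $\partial\Delta^p$; so by Dold--Thom's gluing lemma for quasi-fibrations, if $F_{p-1}\to X$ is a quasi-fibration over $x$ then so is $F_p\to X$, and its fibre over $x$ is the corresponding pushout of fibres, which is exactly the $p$-skeleton of $\|f^{-1}(x)_\bullet\|$. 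The base case $F_{-1}=\emptyset$ is vacuous (and $F_0=X_0\to X$ is a quasi-fibration since $f_0$ is a Serre fibration). Passing to the colimit over $p$, using that $\|X_\bullet\|=\operatorname*{colim}_p F_p$ along closed cofibrations and that an increasing union of quasi-fibrations along such cofibrations is again a quasi-fibration, we conclude that $\epsilon\colon\|X_\bullet\|\to X$ is a quasi-fibration over $x$ with fibre $\|f^{-1}(x)_\bullet\|$, which is the claim.

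I expect the only delicate points to be bookkeeping: checking that the skeletal inclusions $F_{p-1}\hookrightarrow F_p$ are closed cofibrations, that the pushout squares feeding into the induction satisfy the hypotheses of the gluing lemma, and that the colimit lemma applies — all standard for geometric realizations of (semi-)simplicial spaces and precisely what is carried out in \cite[Lem.~2.1]{orw10}. As usual one must also be careful that "homotopy fibre sequence" is read as the statement that the point-set fibre maps by a weak equivalence to the homotopy fibre, which is exactly what the quasi-fibration property over $x$ provides.
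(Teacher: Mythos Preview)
Your proposal is correct and is exactly the approach the paper takes: the paper gives no independent argument but simply cites the proof of Lemma~2.1 in \cite{orw10}, and you have faithfully spelled out that skeletal-filtration/quasi-fibration argument. One small quibble: the parenthetical ``equivalently, that \ldots\ every face map $X_p\to X_{p-1}$ is a Serre fibration'' is not actually equivalent (the converse implication fails), but you only use that each composite $f_p\colon X_p\to X$ is a Serre fibration, so the argument is unaffected.
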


\begin{lemma} \label{lem: braid weak equiv}The maps $\norm{D_n^\xi(\surf)_\bullet} \to \conf_n^\xi(\surf)$ and $\norm{D_{n,1}^\xi(\surf)_\bullet} \to \conf_{n,1}^\xi(\surf)$ are weak equivalences.
\end{lemma}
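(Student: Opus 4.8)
The plan is to produce an explicit semi-simplicial contraction, exactly as in the proof of Lemma 2.1 of \cite{orw10}, by exhibiting an ``extra degeneracy'' after pulling back along a small disk. Fix a point $b$ in a boundary collar of $\surf$ and a small embedded disk $U \subset \surf$ around $b$ disjoint from all the configuration data that matters. The key observation is that for any $\underline{c} \in \conf_n^\xi(\surf)$, after a small isotopy we may assume $\underline{c}$ (and in the $D_{n,1}^\xi$ case, the distinguished subset as well) is disjoint from $U$; then a point of $U$ gives a canonical ``new point'' $p_{-1}$ that can be prepended to any simplex $(\underline{c}, p_0, \dots, p_i)$ whose $p_j$ also avoid $U$. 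This furnishes, on the relevant sub-semi-simplicial space, face-compatible maps $D_n^\xi(\surf)_i \to D_n^\xi(\surf)_{i+1}$ playing the role of an augmentation-splitting extra degeneracy, which forces the geometric realisation to be contractible over $\conf_n^\xi(\surf)$ fibrewise.

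More precisely, the steps I would carry out are as follows. First I would check that the unique face map $f \colon D_n^\xi(\surf)_0 \to \conf_n^\xi(\surf)$ (forgetting $p_0$) is a Serre fibration: its fibre over $\underline{c}$ is $\surf \setminus \underline{c}$, and in fact $f$ is a fibre bundle with structure group $\mathrm{Diff}(\surf, \underline{c})$ acting on $\surf \setminus \underline{c}$, by the standard Fadell--Neuwirth argument. This lets me invoke \cref{lem: fibre sequence} to reduce the weak equivalence $\norm{D_n^\xi(\surf)_\bullet} \to \conf_n^\xi(\surf)$ to showing that $\norm{f^{-1}(\underline{c})_\bullet}$ is contractible for every $\underline{c}$. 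Second, I would identify $f^{-1}(\underline{c})_\bullet$ with (a model for) the augmented semi-simplicial space $p \mapsto \pconf_{p+1}(\surf \setminus \underline{c})$ of ordered configurations — i.e. the semi-simplicial space whose realisation computes the ``configuration-space-to-a-point'' situation — and show it is contractible. Third, I would produce the contraction: since $\surf$ is an open surface, $\surf \setminus \underline{c}$ has a boundary collar, and prepending a fixed point near the boundary defines semi-simplicial maps $s_{-1}$ raising degree by one and satisfying $\del_0 s_{-1} = \mathrm{id}$, $\del_{j+1} s_{-1} = s_{-1} \del_j$; this is exactly an extra degeneracy and gives a canonical nullhomotopy of the identity on the realisation. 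The same argument works verbatim for $D_{n,1}^\xi(\surf)_\bullet$, since forgetting $p_0$ again gives a fibre bundle with fibre $\surf$ minus the $\xi$-configuration $\underline{c}$ (the distinguished partition of the subsets plays no role in the topology of the fibre).

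The main obstacle — and the only place where real care is needed — is the bookkeeping around the point $*$ and the boundary collar: one must arrange the prepended point(s) to lie in a region genuinely disjoint from the supports of all configurations and all the $p_j$ simultaneously, and to do so continuously in families, so that $s_{-1}$ is well-defined on the whole space rather than just on a chart. This is handled by working inside a fixed collar $\del_0 \times [0,1)$ and noting that any compact family of configurations can be isotoped, continuously and canonically, off a slightly smaller collar; since $\surf$ is the interior of a surface with boundary this collar is always available, which is exactly why the lemma is stated only for open $\surf$. Once that is set up, verifying the semi-simplicial identities for $s_{-1}$ and appealing to the standard fact that an augmented semi-simplicial space admitting an extra degeneracy has contractible realisation completes the argument, and then \cref{lem: fibre sequence} upgrades the fibrewise statement to the desired weak equivalence.
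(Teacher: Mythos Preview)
Your reduction via \cref{lem: fibre sequence} to the contractibility of the fibre $\norm{f^{-1}(\underline{c})_\bullet}$ is exactly the paper's strategy, and the extra-degeneracy argument you propose for that contractibility is a valid alternative to the paper's more hands-on coning argument. However, there is a genuine contextual error: you assert that ``the lemma is stated only for open $\surf$'' and build your $s_{-1}$ on a fixed boundary collar of $\surf$ itself. This is wrong --- the lemma sits in the section devoted to \emph{closed} surfaces and is invoked in the proof of \cref{thm: surface braid closed} precisely when $\surf$ is closed, where no boundary collar of $\surf$ is available. As written, your construction does not cover that case, and that case is the whole point.

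The repair is immediate and is essentially what the paper does: the relevant fact is not that $\surf$ is open but that $\surf - \underline{c}$ is, since it has $n\xi$ punctures for every $\underline{c}$. The paper exploits this by choosing a compact $\surf' \subset \surf - \underline{c}$ with $\surf' \simeq \surf - \underline{c}$, picking a point $x$ in the complement (near a puncture), deforming any map $S^k \to \norm{F(\surf - \underline{c})_\bullet}$ off of $x$ via simplicial approximation, and then coning to $x$. Your extra-degeneracy approach can be salvaged in the same way by taking the collar around a puncture of $\surf - \underline{c}$ rather than around $\partial\surf$. Note that there is then no hope of a \emph{global} extra degeneracy on $D_n^\xi(\surf)_\bullet$, since the available puncture moves with $\underline{c}$; the contraction must remain fibrewise. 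That is fine --- \cref{lem: fibre sequence} already reduced you to the fibrewise statement --- but it means the last paragraph of your proposal, about arranging things ``continuously in families'' on the whole space, is both unnecessary and, for closed $\surf$, unachievable.
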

\begin{proof} 
We prove that the first map is a weak equivalence.
By \cref{lem: fibre sequence} the map $ \norm{D_n^\xi(\surf)_\bullet} \rightarrow \conf_n^\xi(\surf) $
is a homotopy fibre sequence with fibre over a $\xi$-configuration $\underline{c}$ given by $\norm{F(\surf - \underline{c})_\bullet}$, where $F(\surf - \underline{c})_\bullet$ is the semi-simplicial space whose $i${th} term consists of unordered $(i+1)$-tuples of distinct points in $\surf - \underline{c}$. We will show that $\norm{F(\surf - \underline{c})_\bullet}$ is contractible.

By taking small neighbourhoods of the points in $\underline c$, we can find a closed surface $\surf' \subset (\surf - \underline{c})$ which is homotopy equivalent to $(\surf - \underline{c})$ with some point $x \in (\surf - \underline{c}) - \surf'$. 

Now suppose we have map $f : S^k \to \norm{F(\surf - \underline{c})_\bullet}$. By the previous homotopy equivalence, we can deform $f$ so that $x$ does not lie in its image. Moreover, by simplicial approximation, we can find a PL triangulation of $S^k$ and take $f$ to be simplicial. Now, we can fill in $f$ by defining a map $\hat{f} : \mathrm{cone}(S^k) \to \norm{F(\surf - \underline{c})_\bullet}$ that sends the cone point to $x$.

Therefore $\norm{F( \surf - \underline{c})_\bullet}$ is contractible and so the map $\norm{D_n^\xi(\surf)_\bullet} \rightarrow \conf_n^\xi(\surf)$ is an equivalence. The argument that $\norm{D_{n,1}^\xi(\surf)_\bullet} \to \conf_{n,1}^\xi(\surf)$ is a weak equivalence is similar.
\end{proof}

There are semi-simplicial maps
\[ D_n^\xi(\surf)_\bullet \leftarrow D_{n,1}^\xi(\surf)_\bullet \rightarrow D_{n+1}^\xi(\surf)_\bullet \]	
modelled on the maps for $\xi$-configurations, $\conf_n^{\xi}(\surf) \leftarrow \conf_{n,1}^\xi(\surf) \rightarrow \conf_{n+1}^\xi(\surf)$.

\begin{lemma} \label{lem: braid resolution} There is a map of semi-simplicial spaces
\[ D_n^\xi(\surf)_\bullet \xrightarrow{\tau} \symp^n_{fib}(D_{n,1}^\xi(\surf)_\bullet) \rightarrow \symp^n_{fib} (D^\xi_{n+1}(\surf)_\bullet) \]
which induces a map
\[ \norm{D_n^\xi(M)_\bullet} \xrightarrow{\tau} \symp^n(\norm{D_{n,1}^\xi(\surf)_\bullet}) \rightarrow \symp^n(\norm{D_{n+1}^\xi(\surf)_\bullet}) \]
on geometric realisations.
The terms in these maps are described in the proof.
\end{lemma}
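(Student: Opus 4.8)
The plan is to carry out all three constructions levelwise over the auxiliary points and only then pass to geometric realisations. First I would fix notation: write $P_i := \{(p_0, \ldots, p_i) \in \surf^{i+1} \st p_j \neq p_k\}$, so that forgetting $\underline c$ gives projections $D_n^\xi(\surf)_i \to P_i$, $D_{n,1}^\xi(\surf)_i \to P_i$ and $D_{n+1}^\xi(\surf)_i \to P_i$, all compatible with the face maps $\del_j$ (which delete $p_j$, and which make $P_\bullet$ itself a semi-simplicial space). For a semi-simplicial space $Y_\bullet$ equipped with such a projection to $P_\bullet$, define $\symp^n_{fib}(Y_\bullet)_i$ to be the $\sym_n$-quotient of the $n$-fold fibre product $Y_i \times_{P_i} \cdots \times_{P_i} Y_i$; concretely a point is a tuple $(p_0, \ldots, p_i; \{y^1, \ldots, y^n\})$ with the $y^l$ all lying over $(p_0, \ldots, p_i)$, repetitions allowed. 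Because the face maps of $Y_\bullet$ cover those of $P_\bullet$, these assemble into a semi-simplicial space, and the construction is functorial for fibre-preserving maps of semi-simplicial spaces over $P_\bullet$. Finally, there is a natural map $\norm{\symp^n_{fib}(Y_\bullet)} \to \symp^n(\norm{Y_\bullet})$, sending a point $(t, p_\bullet, \{y^l\})$ with $t$ in the $i$-simplex to $\{(t, y^1), \ldots, (t, y^n)\}$; it need not be an equivalence, but it is all that is needed to produce the displayed map on $\norm{-}$.

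Next I would build $\tau$. The forgetful map $q_\bullet : D_{n,1}^\xi(\surf)_\bullet \to D_n^\xi(\surf)_\bullet$ (the left-hand semi-simplicial map recalled just before the lemma) is, in each degree $i$, an $n$-sheeted covering map and a map over $P_\bullet$; its fibre over $(\underline c, p_0, \ldots, p_i)$ is the set of $n$ ways of distinguishing one of the $\xi$-subsets making up $\underline c$. Define $\tau_i : D_n^\xi(\surf)_i \to \symp^n_{fib}(D_{n,1}^\xi(\surf)_i)$ by $\tau_i(\underline c, p_0, \ldots, p_i) = (p_0, \ldots, p_i; q_i^{-1}(\underline c, p_0, \ldots, p_i))$, i.e. the fibrewise transfer of the covering $q_i$. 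Continuity is the standard fact that the unordered fibre of a finite covering varies continuously, and $\tau_\bullet$ is semi-simplicial because $q_\bullet$ is and because deleting an auxiliary point $p_j$ changes neither $\underline c$ nor the collection of its $\xi$-subsets, hence is compatible with $q_i^{-1}$. The second map of the lemma is then $\symp^n_{fib}$ applied to the given right-hand semi-simplicial map $D_{n,1}^\xi(\surf)_\bullet \to D_{n+1}^\xi(\surf)_\bullet$, which — like the underlying map $\conf_{n,1}^\xi(\surf) \to \conf_{n+1}^\xi(\surf)$ it is modelled on — only alters the configuration near the distinguished subset and leaves the $p_j$ fixed, so is fibre-preserving over $P_\bullet$; functoriality of $\symp^n_{fib}$ then applies. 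Composing $\tau_\bullet$ with this, taking $\norm{-}$, and post-composing with the natural maps of the first paragraph yields the stated map on geometric realisations.

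I expect the one genuine subtlety to be the identification of what the realised composite computes, and correspondingly the interaction of $\symp^n_{fib}$ with $\norm{-}$. By \cref{lem: braid weak equiv}, $\norm{D_n^\xi(\surf)_\bullet} \simeq \conf_n^\xi(\surf)$ and $\norm{D_{n,1}^\xi(\surf)_\bullet} \simeq \conf_{n,1}^\xi(\surf)$, and $\norm{q_\bullet}$ is a model for the $n$-sheeted covering $\conf_{n,1}^\xi(\surf) \to \conf_n^\xi(\surf)$; the levelwise fibrewise transfer $\tau_\bullet$ should then realise the transfer of that covering, and after the natural map to the honest $\symp^n$ it should agree, up to homotopy, with the map $\conf_n^\xi(\surf) \to \symp^n(\conf_{n,1}^\xi(\surf))$ of the Dold--Thom description of $t_{n,1}$ recalled earlier. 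The point requiring care is that geometric realisation does \emph{not} commute with the fibre products defining $\symp^n_{fib}$, so $\norm{\symp^n_{fib}(-)}$ cannot be identified with $\symp^n(\norm{-})$ on the nose; the argument must instead pass through the weak equivalences of \cref{lem: braid weak equiv}, naturality of the covering-space transfer, and the fact that $\symp^n$ preserves weak equivalences between spaces of the homotopy type of CW complexes, to conclude that the constructed map realises the transfer-followed-by-stabilisation map used in the proof of \cref{thm: surface braid closed}. The remaining verifications — continuity, the semi-simplicial identities, and fibre-preservation of the right-hand map — are routine.
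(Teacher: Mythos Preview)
Your approach is essentially the paper's: define the fibrewise symmetric product over $P_i = \pconf_{i+1}(\surf)$, take $\tau$ to be the levelwise transfer of the $n$-sheeted covering $q_\bullet : D_{n,1}^\xi(\surf)_\bullet \to D_n^\xi(\surf)_\bullet$, observe it commutes with face maps, and apply $\symp^n_{fib}$ functorially to the other leg. Your explicit construction of the comparison map $\norm{\symp^n_{fib}(Y_\bullet)} \to \symp^n\norm{Y_\bullet}$ and your discussion of how the realised map models the Dold--Thom transfer are more detailed than what the paper writes, but the content is the same.

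One point to correct: your description of the right-hand map as one that ``only alters the configuration near the distinguished subset'' is not right, and there is no stabilisation involved at this stage (indeed the whole point of the section is that there is no stabilisation map for closed $\surf$). The map $\conf_{n,1}^\xi(\surf) \to \conf_{n-1}^\xi(\surf)$ used here simply \emph{deletes} the distinguished subset; the ``$n+1$'' in the displayed statement is an indexing slip, as you can see from how the lemma is actually used in the proof of \cref{thm: surface braid closed}, where the composite lands in $\symp^n_{fib}(D_{n-1}^\xi(\surf)_\bullet)$. Your argument still goes through verbatim once you replace your description by the forgetful map --- it is evidently fibre-preserving over $P_\bullet$ and semi-simplicial --- but the final identification in your last paragraph should read ``transfer followed by forgetting the distinguished subset'', i.e.\ the map $t_n$, rather than ``transfer-followed-by-stabilisation''.
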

\begin{proof} There are fibration sequences given by
	\[ \conf_n^\xi(\surf - \mbox{ $i+1$ points }) \rightarrow D_n^\xi(\surf)_i \xrightarrow{\pi} \pconf_{i+1}(\surf) \]
where $\pi$ is the map that sends $(\underline{c}, p_0, \ldots, p_i) \mapsto (p_0, \ldots, p_i)$. Let $\symp^n_{fib}(D_n^\xi(\surf)_i)$ denote the $n$-fold fibrewise symmetric product with respect to this fibration. The maps $D_{n,1}^\xi(\surf)_i \rightarrow D_{n+1}^\xi(\surf)$ are fibrewise $n$-fold coverings over $\pconf_{i+1}(\surf)$ which give rise to transfer maps 
	\[ \tau : D_{n+1}^\xi(\surf)_i \rightarrow \symp^n_{fib} (D_{n,1}^\xi(\surf)_i).\] 
One can check that $\tau$ commutes with the face maps and so defines a transfer $\tau$ on geometric realisations as in the statement of the lemma.
\end{proof}

We can now prove \cref{thm: surface braid closed}
\begin{proof}[Proof of \cref{thm: surface braid closed}] Associated to a semi-simplicial space is a spectral sequence which computes the homology of its geometric realisation in terms the the homology of its levels. Therefore \cref{lem: braid resolution} gives rise to a map of spectral sequences converging to the transfer map
	\[ t: H_*( \conf_n^\xi(\surf) ; \mathbb{Q}) \rightarrow H_*(\conf_{n-1}^\xi(\surf); \mathbb{Q}) \]
where we have used \cref{lem: braid weak equiv} to identify $\conf_n^\xi(\surf) \simeq \norm{D_n^\xi(\surf)}$ and the Dold-Thom theorem to get a map from $H_*(\norm{D_n^\xi(\surf)_\bullet} ; \mathbb{Q}) \rightarrow H_*(\norm{D_{n-1}^\xi(\surf)_\bullet}; \mathbb{Q})$.

The map on $E^1_{pq}$ terms of the spectral sequence is
	\[ H_q(D_n^\xi(\surf)_p ; \mathbb{Q}) \rightarrow H_q(D_{n-1}^\xi(\surf)_p ; \mathbb{Q})\]
and is induced by $D_n^\xi(M)_i \rightarrow \symp^n_{fib}(D_{n,1}(\surf)_i) \rightarrow \symp^n_{fib}(D_{n-1}^\xi(\surf)_p)$. From the fibration in the proof of \cref{lem: braid resolution}, there is a Serre spectral sequence converging to this map. The map on $E^2_{pq}$ terms of this Serre spectral sequence is
\begin{align*} H_p(\pconf_i(\surf); H_q( \conf_n^\xi(&\surf -  \mbox{$i+1$ points}); \mathbb{Q})) \rightarrow \\
		& H_p(\pconf_{i}(\surf); H_q( \conf_{n-1}^\xi(\surf - \mbox{$i+1$ points}); \mathbb{Q})). 
\end{align*}
On coefficients, it is induced by the transfer map
	\[t:  H_q( \conf_n^\xi(\surf -  \mbox{$i+1$ points}); \mathbb{Q}) \rightarrow H_q( \conf_{n-1}^\xi(\surf - \mbox{$i+1$ points}); \mathbb{Q}). \]
Since $(\surf - \mbox{$i + 1$ points})$ is an open surface, \cref{prop: surface braid transfer} implies that this is an isomorphism for $q \leq (n-1)/2$. Thus the map $H_q( D_n^\xi(\surf)^p; \mathbb{Q}) \rightarrow H_q( D_{n-1}^\xi(\surf)^p; \mathbb{Q})$ is an isomorphism in this range. In particular,
	\[ t_n: H_*(\conf_n^\xi(\surf) ;\mathbb{Q}) \rightarrow H_*(\conf_{n-1}^\xi(\surf) ; \mathbb{Q})  \]
is an isomorphism for $* \leq (n-1)/2$.
\end{proof}

%%%%%%%%%%%%%%%%%%%%Bibliography
\bibliographystyle{alpha}
\bibliography{references}

\end{document}